\documentclass[11pt,reqno]{amsart}
\usepackage{graphicx}

\usepackage{amscd,amssymb,amsmath,amsthm}
\usepackage{graphicx}
\usepackage{color}
\usepackage{cite}
\topmargin=0.1in \textwidth5.7in \textheight7.7in

\newtheorem{thm}{Theorem}

\newtheorem{lemma}{Lemma}

\numberwithin{equation}{section} \setcounter{tocdepth}{1}


\def\R{\mathbb{R}}

\def\Z{\mathbb{Z}}


\begin{document}

\title[The dynamical system of floor function]{The dynamical system generated by the floor function $\lfloor\lambda x\rfloor$}

\author{Rozikov U.A., Sattarov I.A., Usmonov J.B.}

\address{U.\ A.\ Rozikov \\ Institute of mathematics,
29, Do'rmon Yo'li str., 100125, Tashkent, Uzbekistan.}
\email {rozikovu@yandex.ru}

 \address{I.A. Sattarov and J.B. Usmonov \\ Namangan state university, Namangan, Uzbekistan.} \email
{iskandar1207@rambler.ru\ \ javohir0107@mail.ru}

\begin{abstract}
We investigate the dynamical system generated by the function
$\lfloor\lambda x\rfloor$ defined on $\R$ and with a parameter $\lambda\in \R$.
For each given $m\in \mathbb N$ we show that there exists a region of values of $\lambda$,
 where the function has exactly $m$ fixed points (which are non-negative integers),
 also there is another region for $\lambda$, where there are exactly $m+1$ fixed points (which are non-positive integers).
 Moreover the full set $\mathbb Z$ of integer numbers is the set of fixed points iff $\lambda=1$.
 We show that depending on $\lambda$ and on the initial point $x$ the limit of the forward orbit of the dynamical
 system may be one of the following possibilities: (i) a fixed point, (ii) a two-periodic orbit
  or (iii) $\pm\infty$.
\end{abstract}

\keywords{Dynamical systems; floor function; fixed point; trajectory.}
\subjclass[2010]{37E05.}
\maketitle

\section{Introduction and Preliminaries}
Let $X\subset \R$ and $f$ be a map from $X$ to itself.
The set $X$ need not be closed or bounded interval, although this is usually assumed
in the literature. The point of view of dynamical systems is to study iterations of $f$:
if $f^n$ denotes the $n$-fold composition (iteration) of $f$ with itself, then for a given point $x$ one
investigates the sequence $x, f(x), f^2(x), f^3(x)$, and so on. This sequence is called
one-dimensional discrete time dynamical system or the forward orbit of $x$, or just the orbit of $x$ for short.

The theory of one-dimensional non-linear dynamical systems
underwent considerable progress, as the result of the
efforts of theorists from several fields -in particular from physics-
to get a better understanding, by making use of the notion of the
"Hopf's bifurcation" of the appearance of cycles and of the transition
to aperiodic or "chaotic" behavior in physical, biological or
ecological systems. These new developments seem to be potentially very
useful for the study of periodic and aperiodic phenomena in economics.
Parts of this theory have been indeed already used in economic or game
theory  \cite{B}-\cite{J}.

In the theory of the dynamical system the main problem is to know the set of limit points of $\{f^n(x)\}_{n\geq 1}$ for each initial point $x$,
 It is particularly interesting when the orbit repeats. In this case $x$ is a periodic point.
 If $f$ has a periodic point of period $m$, then $m$ is called a period for $f$. Given
a continuous map of an interval one may ask what periods it can have, this question was answered in Sharkovsky's well-known theorem.
  One of the implications of the theorem is that if a
discrete dynamical system on the real line has a periodic point of period 3,
then it must have periodic points of every other period. 
But this theorem works only for continuous functions.
The dynamical systems generated by discontinuous functions are rather difficult to study, and each such system
requires a specific method.

In this paper we shall study a one-parametric family of discontinuous functions, which is
defined by the floor function as
$f_\lambda(x)=\lfloor \lambda x\rfloor$, $\lambda \in \R$.

 For convenience of the reader let us give necessary definitions and properties of the floor function.
  The floor function of $x\in \mathbb R$ is defined by
$$ \lfloor x \rfloor=\max\, \{m\in\mathbb{Z}\,:\, m\le x\}.$$
The following are properties of the floor function which we shall use in this paper:
$$\lfloor x \rfloor = m \ \ \mbox{ if and only if } \ \ m \le x < m+1,$$
$$\lfloor x \rfloor = m \ \ \mbox{ if and only if } \ \ x-1 < m \le x,$$
$$x<m \ \ \mbox{ if and only if } \ \ \lfloor x \rfloor <m,$$
$$m\le x \ \ \mbox{ if and only if } m \le \lfloor x \rfloor.$$
The above are not necessarily true if $m$ is not an integer.

The floor function has been applied in the study of mod operator, quadratic reciprocity, rounding, number of digits,
Riemann function etc. Moreover this function is useful to give formulas for prime numbers, here are some of them (see\cite{R}):
there is a number $\theta = 1.3064...$ (Mills' constant) and a number $\omega = 1.92878...$  with the property that
$$\left\lfloor \theta^3 \right\rfloor, \left\lfloor \theta^9 \right\rfloor, \left\lfloor \theta^{27} \right\rfloor, \dots$$
$$\left\lfloor 2^\omega\right\rfloor, \left\lfloor 2^{2^\omega} \right\rfloor, \left\lfloor 2^{2^{2^\omega}} \right\rfloor, \dots$$
are all prime.

These various applications of the floor function gave a motivation to study dynamical systems generated by
such functions. The parameter $\lambda$ makes rich the behavior of our dynamical system generated by $\lfloor \lambda x\rfloor$:
in subsection 2.1 for each given $m\in \mathbb N$ we show that there exists a region of values of $\lambda$,
 where the function has exactly $m$ fixed points (which are non-negative integers),
 also there is another region for $\lambda$, where there are exactly $m+1$ fixed points (which are non-positive integers).
 Moreover the full set $\mathbb Z$ of integer numbers is the set of fixed points iff $\lambda=1$.
 In the rest subsections of the Section 2 we show that depending on $\lambda$ and on the initial point $x$ the limit of the forward orbit of the dynamical
 system may be a fixed point or a two-periodic orbit or $\pm\infty$.
 
\section{The dynamical system}

In this paper we consider the dynamical system associated with the function $f:\R\to\R$ defined by
\begin{equation}\label{3.1}
f(x)\equiv f_\lambda(x)=\lfloor\lambda x\rfloor,
\end{equation}
where $\lambda\in \R$ is a parameter.

\subsection{Fixed points} A point $x\in \R$ is called a fixed point of $f$ if $f(x)=x$.
The set of all fixed points is denoted by Fix$(f)$.
The following lemma gives all fixed points of this function.
\begin{lemma}\label{l1} For the set of fixed points the following hold
\begin{itemize}
\item[1)] If $\lambda\leq 0$ then Fix$(f)=\{0\}$;

\item[2)] If ${m-1\over m}<\lambda\leq{m\over m+1}$ for some $m\in \mathbb N$ then  Fix$(f)=\{0,-1,-2,...,-m\}$;

\item[3)] If $\lambda=1$ then Fix$(f)=\Z$;

\item[4)] If ${m+1\over m}\leq\lambda<{m\over m-1}$ for some $m\in \mathbb N$ then  Fix$(f)=\{0,1,2,...,m-1\}$.
\end{itemize}
\end{lemma}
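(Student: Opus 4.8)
The plan is to reduce the fixed-point equation $f(x)=\lfloor\lambda x\rfloor=x$ to a short list of elementary inequalities. First I would note that $f(x)=\lfloor\lambda x\rfloor$ is always an integer, hence any fixed point $x$ must itself be an integer; write $x=n\in\mathbb{Z}$. By the characterization ``$\lfloor y\rfloor=m$ iff $m\le y<m+1$'' recalled among the preliminary properties, $n$ is a fixed point precisely when
$n\le\lambda n<n+1$. So the entire lemma amounts to determining, for each $\lambda$, which integers $n$ satisfy this double inequality.

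Next I would split on the sign of $n$. For $n=0$ the inequality $0\le 0<1$ holds for every $\lambda$, so $0\in\mathrm{Fix}(f)$ always. For $n>0$, dividing by $n$ gives $1\le\lambda<\frac{n+1}{n}$. For $n=-k$ with $k\in\mathbb{N}$, dividing by $-k$ reverses the inequalities and yields $\frac{k-1}{k}<\lambda\le 1$. Now I would read off the four cases, using that $\frac{k-1}{k}$ is increasing in $k$ while $\frac{n+1}{n}$ is decreasing in $n$. If $\lambda\le 0$, the positive condition needs $\lambda\ge 1$ and the negative condition needs $\lambda>\frac{k-1}{k}\ge 0$, so neither is ever met and $\mathrm{Fix}(f)=\{0\}$. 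If $\frac{m-1}{m}<\lambda\le\frac{m}{m+1}<1$, no positive $n$ qualifies, while $\frac{k-1}{k}<\lambda\le 1$ holds exactly for $k\le m$ (since $\frac{k-1}{k}\le\frac{m-1}{m}<\lambda$ when $k\le m$, and $\frac{k-1}{k}\ge\frac{m}{m+1}\ge\lambda$ when $k\ge m+1$), giving $\mathrm{Fix}(f)=\{0,-1,\dots,-m\}$. If $\lambda=1$ then $\lfloor x\rfloor=x$ iff $x\in\mathbb{Z}$, so $\mathrm{Fix}(f)=\mathbb{Z}$. Finally, if $\frac{m+1}{m}\le\lambda<\frac{m}{m-1}$, then $\lambda>1$ excludes all negative candidates, while $\lambda<\frac{n+1}{n}$ holds exactly for $n\le m-1$ (with the convention $m/(m-1)=+\infty$ when $m=1$), so $\mathrm{Fix}(f)=\{0,1,\dots,m-1\}$.

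To close the argument I would add the observation that the four $\lambda$-regions $\{\lambda\le 0\}$, $\bigcup_{m\in\mathbb{N}}\left(\frac{m-1}{m},\frac{m}{m+1}\right]$, $\{1\}$, and $\bigcup_{m\in\mathbb{N}}\left[\frac{m+1}{m},\frac{m}{m-1}\right)$ are pairwise disjoint and exhaust $\mathbb{R}$ (the first two cover $(-\infty,1)$, the last covers $(1,+\infty)$), so the case analysis is complete and each $\lambda$ falls under exactly one clause.

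The computations are entirely elementary, so there is no real conceptual obstacle. The one place demanding care is the bookkeeping at the endpoints of the $\lambda$-intervals in parts 2) and 4): one must check that the strict versus non-strict inequalities produced by $n\le\lambda n<n+1$ (and the sign flip when dividing by a negative $n$) line up exactly with the half-open intervals in the statement, and one must treat the degenerate value $m=1$ in part 4) separately so that $\frac{m}{m-1}$ is interpreted as $+\infty$ and the set $\{0,1,\dots,m-1\}$ collapses to $\{0\}$.
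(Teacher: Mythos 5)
Your proof is correct. It rests on the same two elementary facts that drive the paper's argument (every fixed point is an integer, and an integer $n$ is fixed iff $n\le\lambda n<n+1$), but you organize the computation in the opposite direction: you solve the double inequality for $\lambda$ as a function of the candidate integer $n$, obtaining the clean criterion ``$n>0$ is fixed iff $1\le\lambda<\frac{n+1}{n}$, and $n=-k<0$ is fixed iff $\frac{k-1}{k}<\lambda\le 1$,'' and then read off which $n$ qualify on each $\lambda$-interval using the monotonicity of $\frac{k-1}{k}$ and $\frac{n+1}{n}$. The paper instead fixes the $\lambda$-window $\bigl({m-1\over m},{m\over m+1}\bigr]$ or $\bigl[{m+1\over m},{m\over m-1}\bigr)$ first and verifies the inequalities integer by integer, including separate substitutions $x=-(m+l)$ and $x=m-1+p$ to rule out fixed points beyond $-m$ and $m-1$. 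Your inversion makes the endpoint bookkeeping (strict versus non-strict, and the sign flip when dividing by a negative integer) completely transparent and dispenses with those auxiliary cases; the paper's version, while longer, produces the explicit inequalities (such as $\lfloor\lambda x\rfloor\ge x+1$ for $x<-m$ or $x>m-1$) that it later reuses in the proofs of the limit theorems. Your closing remark that the four $\lambda$-regions partition $\R$ correctly mirrors the paper's use of the decompositions of $(0,1)$ and $(1,+\infty)$, and your handling of the degenerate case $m=1$ in part 4) is the right precaution.
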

\begin{proof}
1) Let $\lambda\leq 0$. In the case $\lambda=0$ we have $f(x)=0$, i.e., only $x=0$ is fixed point.
Moreover, $x=0$ is a fixed point independently on value of $\lambda$.
For $\lambda<0$ we consider the following cases:
\begin{itemize}
\item[a)] If $x<0$ then $f(x)=\lfloor\lambda x\rfloor\geq 0$, consequently $f(x)\neq x$;

\item[b)] If $x>0$ then $f(x)=\lfloor\lambda x\rfloor<0$, consequently $f(x)\neq x$.
\end{itemize}
Thus if $\lambda\leq 0$ then the equation $\lfloor\lambda x\rfloor=x$ has a unique solution $x=0$.

2) Let $0<\lambda<1$. Since solutions of $\lfloor\lambda x\rfloor=x$ are integer numbers we consider the following partition of the set
$\Z=\Z^-\cup\{0\}\cup \Z^+$. If $x\in \Z^+$ then by $\lambda\in(0,1)$ we have $0<\lambda x<x$ and  $\lfloor\lambda x\rfloor<x$.
     For each $\lambda\in(0,1)$ there exists $m\in\mathbb N$ such that ${m-1\over m}<\lambda\leq{m\over m+1}$ holds, because
\begin{equation}\label{3.2}
(0,1)=\bigcup_{m=1}^{\infty}\left({m-1\over m}, \,\right.\left. {m\over m+1}\right].
\end{equation}
Assume ${m-1\over m}<\lambda\leq{m\over m+1}$ then $\forall x\in \Z^-$ we have
\begin{equation}\label{3.3}
x-{x\over m+1}\leq\lambda x<x-{x\over m}.
\end{equation}
From (\ref{3.3}) for any $x\in\{-1,-2,...,-m\}$ we obtain
$$x<x-{x\over m+1}\leq\lambda x<x-{x\over m}\leq x+1.$$
Thus each $x\in\{-1,-2,...,-m\}$ satisfies $\lfloor\lambda x\rfloor=x$.

Let now $x<-m$ then there exists $l\in \mathbb N$ such that $x=-(m+l)$. By (\ref{3.3}) we get
$$x+{m+l\over m+1}\leq\lambda x<x+{m+l\over m}$$ and
 $$x+1\leq\lambda x<x+1+{l\over m}.$$
 Thus $$\lfloor\lambda x\rfloor\geq x+1>x.$$

This completes the proof of part 2).

3) Straightforward.

4). Let $\lambda>1$.
For $x\in \Z^-$ by $\lambda>1$ we get $\lambda x<x$, hence $\lfloor\lambda x\rfloor<x$.
 Since
\begin{equation}\label{3.4}
(1,+\infty)=\bigcup_{m=1}^{\infty}\left[{m+1\over m}, \right.\left.\, {m\over m-1}\right)
\end{equation}
there is  $m\in \mathbb N$ such that ${m+1\over m}\leq\lambda<{m\over m-1}$ and for $x\in \Z^+$ we have
\begin{equation}\label{3.5}
x+{x\over m}\leq\lambda x<x+{x\over m-1}.
\end{equation}
Consequently for $x\in\{1,2,...,m-1\}$ we have $$x<x+{x\over m}\leq\lambda x<x+{x\over m-1}\leq x+1.$$
 Thus each $x\in\{1,2,...,m-1\}$ is a fixed point.

Now assume $x>m-1$, then there exists  $p\in \mathbb N$ such that $x=m-1+p$. Thus by (\ref{3.5})
we obtain
$$x+{m+p-1\over m}\leq\lambda x<x+{m-1+p\over m-1}.$$
 Since ${p-1\over m}\geq0$ we obtain  $$x+1\leq\lambda x<x+1+{p\over m-1},$$ consequently
 $$\lfloor\lambda x\rfloor\geq x+1>x.$$
Thus if ${m+1\over m}\leq\lambda<{m\over m-1}$ for some $m\in \mathbb N$ then each $x\in \{0,1,2,...,m-1\}$ is a fixed point, these are all possible fixed points.
\end{proof}

\subsection{The limit points}
For a given function $f:\mathbb R\to\mathbb R$ the $\omega$-limit set of $x\in \mathbb R$, denoted by $\omega(x,f)$ or $\omega(x)$,
is the set of cluster points of the forward orbit $\{f^n(x)\}_{n\in \mathbb{N}}$ of the iterated function $f$. Hence, $y\in \omega(x)$ if and only if there is a strictly increasing sequence of natural numbers $\{n_k\}_{k\in \mathbb{N}}$ such that $f^{n_k}(x)\rightarrow y$ as $k\rightarrow\infty$.

In this section for function (\ref{3.1}) we shall describe the set $\omega(x)$ for each given $x\in \mathbb R$.

\subsubsection{The case $\lambda\leq 0$.} The case $\lambda=0$ is trivial $\omega(0)=\{0\}$.
Consider the case $\lambda<0$.

\begin{thm}\label{t1a} If $\lambda<0$ then
 the dynamical system generated by $f$ has the following properties:
\begin{itemize}
\item[1.] If $-1<\lambda<0$ then $\forall x\in \R$ we have
$$\lim_{n\to\infty}f^n(x)=0,$$
i.e., $\omega(x)=\{0\}$.
\item[2.] If $\lambda=-1$ then each non-zero integer has period two, i.e. $f^2(x)=x$ for any $x\in \mathbb Z\setminus \{0\}$. Moreover
$f^3(x)=f(x),$ for each $x\in \R,$ i.e.,
$$\omega(x)=\left\{\begin{array}{ll}
\{x,f(x)\},\ \ \mbox{if} \ \ x\in\mathbb Z\\[3mm]
\{f(x),f^2(x)\}, \ \ \mbox{if} \ \ x\in \R\setminus \mathbb Z.
\end{array}
\right.
$$
\item[3.] If $\lambda<-1$ then $\forall x\in({1\over \lambda},0]$ we have $f(x)=0,$ and
   $$\omega(x)=\left\{\begin{array}{ll}
\{0\},\ \ \mbox{if} \ \ x\in({1\over \lambda},0]\\[3mm]
\{-\infty, +\infty\}, \ \ \mbox{if} \ \ x\in \R\setminus({1\over \lambda},0].
\end{array}
\right.
$$

\end{itemize}
\end{thm}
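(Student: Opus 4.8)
The plan is to split into the three parameter regimes and in each case analyze where the orbit lands after finitely many steps. Throughout I will use that $f(x)=\lfloor\lambda x\rfloor$ is always an integer, so that after one iteration the orbit lies in $\mathbb Z$; hence it suffices to understand $f$ restricted to integers, and the behaviour of the first point $x$ only enters through the value $f(x)$.

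For part 1 ($-1<\lambda<0$): first I would observe that $x=0$ is the only fixed point by Lemma \ref{l1}(1), and that $f$ is orientation-reversing, so orbits oscillate in sign. The key quantitative step is to show that $|f(x)|$ is strictly decreasing along the orbit until it reaches $0$: for an integer $x\neq 0$ one has $|\lfloor\lambda x\rfloor|\le |\lambda x|<|x|$ when $|\lambda|<1$, but since we are dealing with the floor one must be slightly careful near small values — I would check directly that $f(1)=\lfloor\lambda\rfloor\in\{-1,0\}$ and $f(-1)=\lfloor-\lambda\rfloor=0$, and that for $|x|\ge 2$ the strict inequality $|\lfloor\lambda x\rfloor|<|x|$ holds with integer gap at least $1$. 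Thus the sequence of absolute values is a strictly decreasing sequence of non-negative integers, hence reaches $0$ in finitely many steps and stays there, giving $\omega(x)=\{0\}$.

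For part 2 ($\lambda=-1$): here $f(x)=\lfloor -x\rfloor=-x$ for $x\in\mathbb Z$ and $f(x)=-\lceil x\rceil$ in general. For an integer $x\neq 0$ we get $f(x)=-x$, so $f^2(x)=x$ and $x$ has period two. For non-integer $x$, $f(x)=\lfloor -x\rfloor$ is an integer, so from the second step on we are in the integer case and the orbit is the two-cycle $\{f(x),f^2(x)\}=\{f(x),-f(x)\}$; in the exceptional subcase $f(x)=0$ this degenerates to $\{0\}$, but this still sits inside the stated description. The identity $f^3(x)=f(x)$ for all real $x$ follows since $f(x)\in\mathbb Z$ and $f^2,f^3$ then act on integers where $f$ is the involution $x\mapsto -x$. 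This part is essentially a direct computation.

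For part 3 ($\lambda<-1$): first, for $x\in(1/\lambda,0]$ one has $0\le\lambda x<1$, hence $f(x)=0$ and $\omega(x)=\{0\}$; note $0$ is the only fixed point by Lemma \ref{l1}(1). The substantive claim is that every other $x$ has $\omega(x)=\{-\infty,+\infty\}$, i.e. the orbit alternates in sign with $|f^n(x)|\to\infty$. I would again reduce to integers via one iteration, then show that for an integer $x$ with $x\ge 1$ we have $f(x)=\lfloor\lambda x\rfloor\le\lambda x<-x\le -1$, so $f(x)$ is a negative integer of strictly larger modulus, and symmetrically for $x\le -1$ one gets $f(x)=\lfloor\lambda x\rfloor\ge|\lambda||x|-1>|x|$ once $|x|$ is not too small — the $-1$ from the floor is dominated because $|\lambda|>1$. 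Care is needed only for $x=-1$: then $f(-1)=\lfloor-\lambda\rfloor=\lfloor|\lambda|\rfloor\ge 1$, which is a positive integer (equal to $1$ exactly when $1<|\lambda|<2$), and the orbit escapes the window $(1/\lambda,0]$ so the growth argument applies from there on. One must also confirm that the orbit never lands back in $(1/\lambda,0]$: since after the first step the orbit consists of non-zero integers and $1/\lambda\in(-1,0)$, the only integer in $(1/\lambda,0]$ is $0$ itself, which is never hit from a non-zero integer. Hence $|f^n(x)|$ is eventually strictly increasing and unbounded while the sign alternates, giving both $+\infty$ and $-\infty$ as subsequential limits, i.e. $\omega(x)=\{-\infty,+\infty\}$.

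The main obstacle is the careful bookkeeping near the small integers $\pm 1$ in parts 1 and 3, where the $\mp 1$ error introduced by the floor function is comparable to $x$ itself; away from these the inequalities are clean. Handling the borderline window $(1/\lambda,0]$ and verifying the orbit leaves it (and never returns) is the other point requiring attention.
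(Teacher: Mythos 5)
Your part 2 is a correct direct computation, and your overall plan for parts 1 and 3 (pass to integer orbits after one step and track $|f^n(x)|$) is sound; but the one-step monotonicity claims on which both parts rest are false as stated. For $-1<\lambda<0$ you assert that $|\lfloor\lambda x\rfloor|<|x|$ for every integer $x$ with $|x|\ge 2$, so that the absolute values form a strictly decreasing integer sequence. This fails for every positive integer $x$ with $x(1+\lambda)<1$: e.g.\ $\lambda=-0.9$, $x=8$ gives $f(8)=\lfloor-7.2\rfloor=-8$, so $|f(x)|=|x|$; the orbit of $10$ is $10,-9,8,-8,7,-7,\dots$, which stalls in modulus at every other step. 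Symmetrically, for $\lambda<-1$ your claim that a negative integer is sent to a positive integer of strictly larger modulus, with care needed ``only for $x=-1$'', fails for every $x\le -1$ with $|x|\,(|\lambda|-1)\le 1$: e.g.\ $\lambda=-1.1$, $x=-5$ gives $f(-5)=\lfloor 5.5\rfloor=5=|x|$. So the delicate range is not just $x=\pm1$; when $\lambda$ is close to $-1$ the non-strict steps occupy an arbitrarily long initial stretch of moduli, and your ``strictly decreasing/increasing at every step'' assertions are simply wrong there.

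The repair uses the sign alternation you already observed, but with two-step bookkeeping: for $-1<\lambda<0$ the modulus strictly drops when $f$ is applied to a \emph{negative} integer (since $0<\lambda x<|x|$ forces $0\le f(x)\le |x|-1$) and does not increase when applied to a positive one, so the modulus decreases by at least $1$ every two steps and the orbit is absorbed at $0$ after finitely many steps; for $\lambda<-1$ the modulus strictly grows on \emph{positive} integers ($f(x)\le -x-1$) and does not decrease on negative ones ($f(x)\ge|x|$), so $|f^n(x)|\to\infty$ with alternating signs and $\omega(x)=\{-\infty,+\infty\}$, your observation that the orbit never re-enters $({1\over\lambda},0]$ being correct. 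With this correction your argument works and is genuinely different from the paper's: in part 1 the paper does not prove finite-time absorption at $0$, but shows $\{|f^n(x)|\}$ is non-increasing, hence has a limit $\alpha(x)$, and excludes $\alpha(x)\ge1$ via the geometric bound $f^{2k}(x)\le|\lambda|^k|f(x)|\to 0$; in part 3 it argues from the analogous chains of alternating inequalities. Your (corrected) route is more elementary and gives the sharper conclusion that orbits reach $0$ in finitely many steps, at the cost of the parity case analysis you glossed over.
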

\begin{proof} 1. Since for each $x\in \R$ the sequence $\{f^n(x)\}_{n\geq 1}$ is subset of $\mathbb Z$, by the condition $-1<\lambda<0$ and properties a) and b) mentioned in the proof of Lemma \ref{l1}, for $x<0$ we have
\begin{equation}\label{3.7}
f(x)>|f^2(x)|\geq f^3(x)>|f^4(x)|\geq f^5(x)>...
\end{equation}
and for $x>0$ we have
 \begin{equation}\label{3.8}
|f(x)|\geq f^2(x) >|f^3(x)|\geq f^4(x)>|f^5(x)|\geq...
\end{equation}
From  (\ref{3.7}) and  (\ref{3.8}) it follows that $\forall x\in \R$  the sequence $\{|f^n(x)|\}_{n\geq 1}$ of non-negative integer numbers is non-increasing and bounded from below by $0$. Hence there is limit $\lim_{n\to \infty}|f^n(x)|=\alpha(x)\in \{0\}\cup\mathbb N$. We shall show that $\alpha(x)=0$ for any $x\in \R$. Suppose there is $x_0\in \R$ such that $\alpha(x_0)\geq 1$. Then the sequence $\{f^n(x_0)\}_{n\geq 1}$ (without absolute value) has the set of limit points $\{-\alpha(x_0), \alpha(x_0)\}$. Moreover by the properties a) and b) mentioned in the proof of Lemma \ref{l1} if $x_0>0$ (the case $x_0<0$ is similar) then
\begin{equation}\label{-}
\lim_{k\to\infty}f^{2k}(x_0)=\alpha(x_0)\ \ \mbox{and} \ \ \lim_{k\to\infty}f^{2k+1}(x_0)=-\alpha(x_0).
\end{equation}
Since $\lfloor \lambda x\rfloor\leq\lambda x$ for any $x\in \R$, we have  $\lfloor \lambda f^n(x)\rfloor\leq\lambda f^n(x)$, i.e.
$f^{n+1}(x)\leq \lambda f^n(x)$. Since $\lambda<0$, using again the properties a) and b), for $x>0$ we have $f(x)<0$ and
\begin{equation}\label{eq}\begin{array}{lllll}
f^2(x)\leq \lambda f(x)=|\lambda||f(x)|,\\[2mm]
f^4(x)\leq |\lambda||f^3(x)|<|\lambda|f^2(x),\\[2mm]
f^6(x)<|\lambda|f^4(x)<|\lambda|^2f^2(x),\\[2mm]
\dots\ \ \dots\\[2mm]
f^{2k}(x)\leq |\lambda|^{k-1}f^2(x)\leq |\lambda|^k|f(x)|.
\end{array}
\end{equation}
Write the last inequality for $x=x_0$, i.e
$$f^{2k}(x_0)\leq |\lambda|^k |f(x_0)|.$$
Since $\lambda\in (-1,0)$, taking limit from both side of this inequality as $k\to \infty$ and using (\ref{-})
we get $\lim_{k\to\infty}f^{2k}(x_0)=\alpha(x_0)\leq 0$, this contradicts to our assumption $\alpha(x_0)\geq 1$. Thus $\alpha(x)=0$ for any $x\in \R$.
Consequently $\lim_{n\to\infty}f^n(x)=0$, since $\lim_{n\to\infty}|f^n(x)|=0$.

2. For $\lambda=-1$ we have $f(x)=\lfloor -x\rfloor$. Then $\forall x\in \Z$ we get $f^2(x)=[\lfloor-\lfloor-x\rfloor\rfloor=x$.
Moreover it is easy to check that $\lfloor-x\rfloor=\lfloor-\lfloor-\lfloor-x\rfloor\rfloor\rfloor$ for all $x\in \R$. Hence $f^3(x)=f(x)$.

3. Assume $\lambda<-1$ and ${1\over\lambda}<x\leq0$. In this case we have $1>\lambda x\geq0$. Consequently $\lfloor\lambda x\rfloor=0$.

Let now $\lambda<-1$ and $x\in \R\setminus({1\over\lambda},0]$. Then for $x\leq{1\over \lambda}$ we get
\begin{equation}\label{3.9}
f(x)<|f^2(x)|\leq f^3(x)<|f^4(x)|\leq f^5(x)<...
\end{equation}
and for  $x>0$ we have
 \begin{equation}\label{3.10}
|f(x)|\leq f^2(x)<|f^3(x)|\leq f^4(x)<|f^5(x)|\leq...
\end{equation}
Since $\{|f^n(x)|\}\subset \mathbb N$, from (\ref{3.9}) and (\ref{3.10}) it follows that $\lim_{n\to\infty}f^n(x)=\infty$.
Moreover, using properties a) and b) one can see that
$$\lim_{k\to\infty}f^{2k}(x)=\left\{\begin{array}{ll}
+\infty, \ \ \mbox{if} \ \ x>0\\
-\infty, \ \ \mbox{if} \ \ x\leq{1\over \lambda}
\end{array}\right.$$
$$\lim_{k\to\infty}f^{2k+1}(x)=\left\{\begin{array}{ll}
-\infty, \ \ \mbox{if} \ \ x>0\\
+\infty, \ \ \mbox{if} \ \ x\leq {1\over \lambda}.
\end{array}\right.$$
\end{proof}

\subsubsection{The case $0<\lambda<1$.}
Note that for each $\lambda\in (0,1)$ there exists $m\in\mathbb N$ such that ${m-1\over m}<\lambda\leq{m\over m+1}$.

\begin{thm}\label{t2} Let ${m-1\over m}<\lambda\leq{m\over m+1}$ for some $m\in \mathbb N$. Then
$$\lim_{n\to\infty}f^n(x)=\left\{\begin{array}{lll}
0, \ \ \mbox{for all} \ \ x\in[0,+\infty),\\[2mm]
k, \ \ \mbox{for all} \ \ x\in[{k\over\lambda},{k+1\over\lambda}),\\[2mm]
-m, \ \ \mbox{for all} \ \ x\in(-\infty,{-m\over\lambda})
\end{array}\right.
$$
\end{thm}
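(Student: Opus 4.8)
The plan is to prove the stronger statement that, under the hypothesis $\frac{m-1}{m}<\lambda\le\frac{m}{m+1}$, \emph{every} forward orbit is eventually constant and equal to one of the fixed points listed in Lemma~\ref{l1}(2); the three lines of the formula then merely record which fixed point is reached. Since $f^n(x)\in\mathbb Z$ for every $n\ge1$, it suffices, after the first step, to analyse the orbit of an integer. For $x\in[0,+\infty)$ we have $\lambda x\ge0$, hence $f(x)=\lfloor\lambda x\rfloor\ge0$, while if $x>0$ then $0<\lambda<1$ gives $f(x)\le\lambda x<x$; so $f$ maps $[0,+\infty)$ into the non-negative integers and strictly decreases every positive value (two integers with $f(x)<x$ force $f(x)\le x-1$). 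Thus $(f^n(x))_{n\ge1}$ is a non-increasing sequence of non-negative integers that decreases strictly while positive, hence reaches $0$ after finitely many steps and then stays, so $\lim_n f^n(x)=0$. If $x\in[\frac{k}{\lambda},\frac{k+1}{\lambda})$ with $k\in\{-1,-2,\dots,-m\}$, then (using $\lambda>0$) $k\le\lambda x<k+1$, so $f(x)=k$, and $k$ is a fixed point by Lemma~\ref{l1}(2); hence $f^n(x)=k$ for all $n\ge1$.

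The substantive case is $x\in(-\infty,-\frac{m}{\lambda})$, where $\lambda x<-m$ and therefore $f(x)\le-m-1$. The key step, which I would isolate as a claim, is that for every integer $y\le-m-1$ one has $y+1\le f(y)\le-m$. The lower bound $f(y)\ge y+1$ is equivalent to $\lambda y\ge y+1$, i.e. (dividing by $\lambda-1<0$) to $y\le-\frac{1}{1-\lambda}$; since $y\le-(m+1)$ and the hypothesis $\lambda\le\frac{m}{m+1}$ gives $\frac{1}{1-\lambda}\le m+1$, this holds — this is in substance the estimate already carried out in the proof of Lemma~\ref{l1}(2). The upper bound $f(y)\le-m$ is equivalent to $\lambda y<-m+1$; as $y\le-(m+1)$ and $\lambda>0$, it suffices that $\lambda(m+1)>m-1$, which follows from $\lambda>\frac{m-1}{m}\ge\frac{m-1}{m+1}$. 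Granting the claim, the orbit of $x$ lies in $(-\infty,-m-1]$ at the first step and thereafter stays $\le-m$ while increasing strictly; being a strictly increasing integer sequence bounded above by $-m$, it must reach $-m$ after finitely many steps, and $-m$ is fixed, so $\lim_n f^n(x)=-m$. Finally, $[0,+\infty)$, the intervals $[\frac{k}{\lambda},\frac{k+1}{\lambda})$ for $k=-1,\dots,-m$ (which together fill $[-\frac{m}{\lambda},0)$), and $(-\infty,-\frac{m}{\lambda})$ exhaust $\R$, so the three cases cover all $x$.

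The only point requiring genuine care is the two-sided estimate $y+1\le f(y)\le-m$ in the last case: this is precisely where both endpoints of the admissible range of $\lambda$ enter — the upper endpoint $\frac{m}{m+1}$ guarantees that the orbit advances by a full integer step, and the lower endpoint $\frac{m-1}{m}$ prevents it from overshooting $-m$ — and one must keep track of the direction of the inequalities when dividing by the negative quantities $\lambda-1$ and $y$. Everything else reduces to the elementary properties of $\lfloor\cdot\rfloor$ recalled in Section~1 together with Lemma~\ref{l1}(2).
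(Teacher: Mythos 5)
Your proof is correct and follows essentially the same route as the paper: the same three-case decomposition, the observation that $[{k\over\lambda},{k+1\over\lambda})$ is sent in one step to the fixed point $k$, and a monotone integer-orbit argument on $(-\infty,{-m\over\lambda})$. The only differences are refinements rather than a different approach: on $[0,+\infty)$ you replace the paper's squeeze $0\le f^n(x)\le\lambda^n x$ by a discrete descent through the non-negative integers, and in the last case you explicitly prove the two-sided estimate $y+1\le f(y)\le-m$ for integers $y\le-m-1$, thereby justifying the upper bound by $-m$ (i.e.\ that the orbit cannot overshoot the fixed point) which the paper's proof only asserts.
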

where $k\in\{-1,-2,\dots,-m\}.$
\begin{proof} For any $x\in [0,+\infty)$ we have $f(x)=\lfloor \lambda x\rfloor\leq \lambda x$, iterating this inequality
we get $0\leq f^n(x)\leq\lambda^n x.$
Consequently
$$0\leq\lim_{n\to\infty}f^n(x)\leq\lim_{n\to\infty}\lambda^n x=0,$$
i.e. $\lim_{n\to\infty}f^n(x)=0.$

Consider now $x$ and $k\in \{-1,-2,\dots,-m\}$ such that ${k\over\lambda}\leq x<{k+1\over\lambda}$. Then by $0<\lambda<1$ we
get $k\leq\lambda x<k+1$. Consequently,  $\lfloor\lambda x\rfloor=k$. Since each $k\in \{-1,-2,\dots,-m\}$ is a fixed point, we obtain  $$\lim_{n\to\infty}f^n(x)=k.$$

Consider now the case $x<{-m\over \lambda}$. Then $f(x)=\lfloor\lambda x\rfloor\in \mathbb Z$ with $f(x)<-m$.
Moreover for  $\lambda\in(0,1)$ we have $f(x)>x$ (see the proof of part 2 of Lemma \ref{l1}).
Iterating the last inequality we obtain $f^{n+1}(x)>f^{n}(x)$, i.e. $f^n(x)$ is an increasing
sequence, which is bounded from above by $-m$. Since $-m$ is the unique fixed point in $(-\infty,-m]$, we have
$$\lim_{n\to\infty}f^n(x)=-m.$$
\end{proof}

\subsubsection{The case $\lambda\geq1$.}

For $\lambda=1$ we have $f(x)=\lfloor x\rfloor$ and Fix$(f)=\mathbb Z$. It is easy to see that
 $$\lim_{n\to\infty}f^n(x)=\lfloor x\rfloor, \ \ \forall x\in \R.$$

Let now  $\lambda>1$. Because of (\ref{3.4}) it is sufficient to study the dynamics of $f$ at $\lambda$ such that
${m+1\over m}\leq\lambda<{m\over m-1}$, for some $m\in \mathbb N$. Here for $m=1$ we consider $2\leq\lambda<+\infty$.
\begin{thm}\label{t3} If ${m+1\over m}\leq\lambda<{m\over m-1}$ for some $m\in \mathbb N$ then
$$\lim_{n\to\infty}f^n(x)=\left\{\begin{array}{ll}
k, \ \ \mbox{for all} \ \ x\in[{k\over\lambda},{k+1\over\lambda}),\\[2mm]
-\infty, \ \ \mbox{for all} \ \ x\in(-\infty,0),\\[2mm]
+\infty, \ \ \mbox{for all} \ \ x\in [{m\over\lambda},+\infty),
\end{array}\right.
$$
where $k\in\{0,1,\dots,m-1\}.$
\end{thm}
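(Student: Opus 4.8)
The plan is to split $\R$ into the three regions appearing in the statement and to analyse each one separately, using part~4) of Lemma~\ref{l1}, which in the present range of $\lambda$ gives Fix$(f)=\{0,1,\dots,m-1\}$. For $k\in\{0,1,\dots,m-1\}$ and $x\in[k/\lambda,(k+1)/\lambda)$, multiplying by $\lambda>0$ yields $k\le\lambda x<k+1$, hence $f(x)=\lfloor\lambda x\rfloor=k$; since $k$ is a fixed point, $f^n(x)=k$ for all $n\ge1$ and $\lim_{n\to\infty}f^n(x)=k$. Note that the intervals $[k/\lambda,(k+1)/\lambda)$, $k=0,\dots,m-1$, are consecutive and fill out exactly $[0,m/\lambda)$, so together with $(-\infty,0)$ and $[m/\lambda,+\infty)$ we indeed cover all of $\R$.

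For $x<0$: since $\lambda>1$ we have $\lambda x<x<0$, so $f(x)=\lfloor\lambda x\rfloor\le\lambda x<0$, and in fact $f(x)\le-1$. Moreover, once an iterate is negative so are all later ones, because $f^{n+1}(x)=\lfloor\lambda f^n(x)\rfloor\le\lambda f^n(x)$ and $\lambda f^n(x)<0$ whenever $f^n(x)<0$; thus $(-\infty,0)$ is forward invariant. An easy induction on the inequality $f^{n+1}(x)\le\lambda f^n(x)$ (multiply through by $\lambda>0$ at each step) then gives $f^n(x)\le\lambda^{n-1}f(x)$ for every $n\ge1$, and since $f(x)<0$ and $\lambda>1$ the right-hand side tends to $-\infty$; hence $\lim_{n\to\infty}f^n(x)=-\infty$. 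Alternatively one may note that for a negative integer $y$ the relation $\lfloor\lambda y\rfloor\le\lambda y<y$ forces $f(y)\le y-1$, so the orbit decreases by at least $1$ at each step, exactly as in the proof of Lemma~\ref{l1}.

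For $x\ge m/\lambda$: then $\lambda x\ge m$, so $f(x)=\lfloor\lambda x\rfloor\ge m$. For any integer $y\ge m$ the bound $\lambda\ge(m+1)/m$ gives $\lambda y\ge y+y/m\ge y+1$, hence $f(y)=\lfloor\lambda y\rfloor\ge y+1$; in particular $[m,+\infty)$ is forward invariant. Applying this along the orbit of $x$, which already satisfies $f(x)\ge m$, we get $f^{n+1}(x)\ge f^n(x)+1$ for all $n\ge1$, so $f^n(x)\ge f(x)+(n-1)\to+\infty$. Since the three cases exhaust $\R$, this establishes all the claimed limits. I do not anticipate a real obstacle here; the only point requiring a bit of care is turning mere monotonicity into genuine divergence to $\pm\infty$, which is handled by the uniform step bound of $1$ (respectively the geometric bound $\lambda^{n-1}$) together with the forward invariance of $(-\infty,0)$ and $[m/\lambda,+\infty)$ noted above. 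The whole argument runs closely parallel to the proofs of Theorems~\ref{t1a} and~\ref{t2}.
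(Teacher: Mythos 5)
Your proposal is correct and follows essentially the same route as the paper: the same three-region decomposition, the observation that $f$ maps $[k/\lambda,(k+1)/\lambda)$ onto the fixed point $k$, and monotone escape to $\mp\infty$ on $(-\infty,0)$ and $[m/\lambda,+\infty)$. The only (harmless) difference is that you certify divergence quantitatively, via the geometric bound $f^n(x)\le\lambda^{n-1}f(x)$ and the unit step $f(y)\ge y+1$, where the paper instead combines strict monotonicity of the integer orbit with the absence of fixed points from Lemma~\ref{l1}.
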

\begin{proof} Take $x\in [{k\over\lambda},{k+1\over\lambda})$ for some $k\in \{1,\dots,m-1\}$. Then $\lfloor\lambda x\rfloor=k$ and since each $k\in \{1,\dots,m-1\}$ is a fixed point, we obtain $$\lim_{n\to\infty}f^n(x)=k.$$

In the case $x<0$ we have $\lambda x<0$ and
$$x>\lambda x\geq  \lfloor\lambda x\rfloor=f(x).$$
Since $f(x)$ is a non-decreasing function we get from the last
inequality that $f^n(x)>f^{n+1}(x)$, i.e., the sequence $f^n(x)$ is decreasing.
By Lemma \ref{l1} for $\lambda>1$ we know that there is no fixed point of $f$ in $(-\infty,0)$.
Consequently,  $$\lim_{n\to\infty}f^n(x)=-\infty.$$

Assume now ${m\over\lambda}\leq x<+\infty$. Since each $f^n(x)$, $n\geq 1$ is an integer number, for any integer $x\geq m$ we have
$\lfloor\lambda x\rfloor>x$ (see the part 4) of proof of Lemma \ref{l1}). From this inequality it follows that $f^n(x)$ is an increasing sequence and
by Lemma \ref{l1} there is no fixed point in $[m,+\infty)$, hence
 $$\lim_{n\to\infty}f^n(x)=+\infty.$$
\end{proof}

\section*{ Acknowledgements}
U.Rozikov thanks Aix-Marseille University Institute for Advanced Study IM\'eRA
(Marseille, France) for support by a residency scheme. His work also partially supported by the Grant No.0251/GF3 of Education and Science Ministry of Republic
of Kazakhstan.

\end{document}